\theoremstyle{plain}
 \newtheorem{theorem}{Theorem}[section]
\theoremstyle{definition}
 \newtheorem{definition}[theorem]{Definition}
\theoremstyle{plain}
 \newtheorem{proposition}[theorem]{Proposition}
 \newtheorem{corollary}[theorem]{Corollary}
 \newtheorem{lemma}[theorem]{Lemma} %%Delete [theorem] to re-start numbering
 \theoremstyle{definition}
\theoremstyle{remark}
 \newtheorem{remark}[theorem]{Remark}
\def\N{\mathbb{N}}
\def\Z{\mathbb{Z}}
\def\R{\mathbb{R}}
\def\m{\mu}
\DeclareMathOperator{\supp}{supp}
\DeclareMathOperator{\hull}{hull}
\DeclareMathOperator{\sep}{sep}
\DeclareMathOperator{\diam}{diam}
\let\eps\varepsilon
\begin{document}

\title{Coarse medians and Property A}

\author{J\'an \v Spakula}
\address{Mathematical Sciences, University of Southampton, SO17 1BJ, United Kingdom}
\email{jan.spakula@soton.ac.uk}

\author{Nick Wright}
% \address{Mathematical Sciences, University of Southampton, SO17 1BJ, United Kingdom}
\email{n.j.wright@soton.ac.uk}

\begin{abstract}
We prove that uniformly locally finite quasigeodesic coarse median spaces of finite rank and at most exponential growth have Property A. This offers an alternative proof of the fact that mapping class groups have Property A.
\end{abstract}

\subjclass{20F65 (30L05)}
\keywords{coarse median, Yu's Property A}

\maketitle

\section{Introduction}\label{sec:intro}

Coarse median spaces and groups were invented by Bowditch \cite{Bowditch:2013ab,Bowditch:2013aa,Bowditch:2014aa} as (we are guessing here) a device offering a unified approach to hyperbolic groups and mapping class groups.

Indeed, hyperbolic groups are precisely coarse median groups of rank $1$ \cite[Theorem 2.1]{Bowditch:2013ab}, and mapping class groups are instances of coarse median groups of finite rank \cite[Theorem 2.5]{Bowditch:2013ab}.

Furthermore, groups that are relatively hyperbolic with respect to a collection of coarse median groups are again coarse median \cite{Bowditch:2013aa}. This provides more examples of coarse median groups, for instance geometrically finite Kleinian groups and Sela's limit groups.

Coarse median approach to these classes of groups is quite powerful:
In this series of papers, Bowditch uses it to give unified proofs of some properties, for instance the Rapid Decay property, quadratic isoperimetric inequality and a computing the dimension of asymptotic cones.

Intuitively, a coarse median space is a metric space endowed with a ternary structure (a map assigning a point to every triple of points), which is metrically a controlled amount away from being an actual median structure. (Finite) set with an actual median structure are just (vertex sets of) CAT(0) cube complexes. Hence one may loosely regard coarse median structures as coarse versions of (metrized) CAT(0) cube complexes. This analogy works exactly in the ``rank one'' situation, where the CAT(0) cube complexes are trees, and hyperbolic groups are ``coarsely tree-like''. For the actual definitions, see Section \ref{sec:prelim}.

The main result of this piece is that quasigeodesic coarse median spaces of finite rank, which are uniformly locally finite and have at most exponential growth, have Yu's Property A. For proving Property A we use a criterion which is an adaption Brown and Ozawa's proof \cite{Brown:2008qy} that hyperbolic groups act amenably on the boundary. As a side-effect, we obtain a quick proof of Property A for finite dimensional CAT(0) cube complexes, a fact originally established in \cite{Brodzki:2009aa} by a different, more combinatorial, method. Our proof for coarse median spaces is a coarsification of this short argument.

As a consequence, we obtain an alternative proof of the result that mapping class groups have Property A (i.e.~are exact), originally proved by Hamenst\"adt \cite{Hamenstadt:2009aa} and Kida \cite{Kida:2006aa}.

Finally, we would like to mention a related notion of \emph{hierarchically hyperbolic spaces} (and groups), developed recently in \cite{Behrstock:2014aa,Behrstock:2015aa}. While this property is stronger (see \cite[Section 7]{Behrstock:2015aa}), and somewhat more involved than coarse medians it is also substantially more powerful: it implies even finite asymptotic dimension \cite{Behrstock:2015ab}. Having finite asymptotic dimension is a strictly stronger property than Property A. We close off with a question: do coarse median groups of finite rank have finite asymptotic dimension?

\medskip

The structure of the paper is as follows: in Section \ref{sec:prelim} we recall the relevant definitions and facts. Section \ref{sec:criter} explains Brown and Ozawa's criterion for Property A. In Section \ref{sec:cat0} we outline the quick proof of Property A for CAT(0) cube complexes. In Section \ref{sec:topmed} we establish some facts about (metric) median algebras; and finally Section \ref{sec:coarsemed} contains the proof of the main result.

\subsection{Acknowledgements.}

The first author thanks Goulnara Arzhantseva for her encouragement, continuing support, and the initial impetus for this work.

\section{Preliminaries}\label{sec:prelim}

\subsection{CAT(0) cube complexes}\label{subsec:defs-cat0}

We recall the notions related to CAT(0) cube complexes. For details, please consult \cite{Bridson:1999aa,Niblo:1998aa}.

A \emph{cube complex} is a polyhedral complex in which the cells are Euclidean cubes of side length one, the attaching maps are isometries identifying the faces of a given cube with cubes of lower dimension and the intersection of two cubes is a common face of each. One-dimensional cubes are called \emph{edges}; and the complex is \emph{finite-dimensional} if there is a bound on the dimension of its cubes.

Recall that we can endow a cube complex with a naturally defined \emph{geodesic metric}. Furthermore, we can endow the set of vertices of a cube complex with an \emph{edge-path} metric; in the finite-dimensional case, this metric is coarsely equivalent to (the restriction of) the geodesic metric \cite[Proposition 1.7]{Brodzki:2009aa}.

A cube complex is a \emph{CAT(0) cube complex}, if the underlying topological space is simply connected and the complex satisfies Gromov's \emph{link condition} \cite{Gromov:1987aa}.
In the finite-dimensional case, this is equivalent to asking that the geodesic metric satisfies the CAT(0) inequality \cite{Bridson:1999aa}.

A \emph{hyperplane} $H$ (or a \emph{wall}) is a geometric hyperplane, which cuts each cube that it intersects exactly in half. Such an $H$ divides the vertex set into two path-connected subspaces which are referred to as \emph{half-spaces}. Two hyperplanes \emph{cross}, if each of the four possible intersections of the associated half-spaces is non-empty. We say that $H$ \emph{separates} two vertices, if every edge-path connecting them crosses $H$. For two sets for vertices $A$ and $B$, we shall write $A|_HB$ if $H$ separates every vertex in $A$ from every vertex in $B$, i.e. $A$ and $B$ are in different half-spaces determined by $H$. The \emph{interval} $[x,y]$ between two vertices $x,y$ is the intersection of all half-spaces containing both vertices.

Every $n$-dimensional cube in a CAT(0) cube complex defines $n$ pairwise intersecting hyperplanes (whose it \emph{crosses}), and conversely, a collection of $n$ pairwise intersecting hyperplanes define a unique $n$-cube (which crosses exactly these hyperplanes).

Note that the set of vertices of a CAT(0) cube complex is a median algebra in the sense defined below --- the median of three points $x,y,z$ is the unique vertex in the intersection $[x,y]\cap[y,z]\cap[z,x]$ \cite{Roller:1998aa}. Equivalently the median of $x,y,z$ is the unique point lying on a geodesic between $x$ and $y$, on a geodesic between $y$ and $z$ and a geodesic between $z$ and $x$. Furthermore, the notions of an interval, wall, etc\dots are the same whether defined as here, or using the median structure (below).

In a CAT(0) cube complex, each collection of pairwise intersecting hyperplanes determines a unique cube, and conversely, each cube (of dimension $k$) provides $k$ pairwise intersecting hyperplanes.
A \emph{cube path} from a vertex $x$ to a vertex $y$ in a CAT(0) cube complex $X$ is a sequence of cubes $C_0,\dots,C_n$, such that $x$ is a vertex of $C_0$, $y$ is a vertex of $C_n$, and every two consecutive cubes intersect in exactly one vertex.
A \emph{normal cube path} from $x$ to $y$ is a cube path from $x$ to $y$, such that every hyperplane separating $x$ and $y$ is crossed exactly once, with the maximal number of hyperplanes crossed at each step \cite{Niblo:1998aa}. Note that if $X$ is finite-dimensional, then $\frac1d \rho(x,y)\leq n\leq \rho(x,y)$, where $d$ is the dimension of $X$ and $\rho$ denotes the edge-path distance.
We also refer to the sequence of the common vertices between the consecutive cubes on the normal cube path as the normal cube path.

\subsection{Metric median algebras}\label{subsec:defs-metrmed}

We summarise the notions that we need for this paper. For a more thorough account on median structures, we refer to \cite{Bowditch:2014aa,Bowditch:2013ab}. The median algebras can be thought of an abstraction of CAT(0) cube complexes --- every finite median algebra \emph{is} actually the vertex set of a finite CAT(0) cube complex. While in one direction of this link works in general, median algebras can be ``larger'' (for instance $\R$-trees are also median algebras).

A \emph{median algebra} is a set, $\Phi$, equipped with a ternary operation, $\m:\Phi^3\to \Phi$, such that for all $a,b,c,d,e\in \Phi$, we have:
\begin{itemize}
  \item[(M1):] $\m(a,b,c)=\m(b,c,a)=\m(b,a,c)$,
  \item[(M2):] $\m(a,a,b)=a$, and
  \item[(M3):] $\m(a,b,\m(c,d,e))=\m(\m(a,b,c),\m(a,b,d),e)$.
\end{itemize}
While this is the formal definition, we prefer to think about finite median algebras as the vertex sets of finite CAT(0) cube complexes (with the natural median structure).

Given $a,b\in \Phi$, the \emph{interval} $[a,b]$ is defined to by $[a,b]=\{c\in \Phi\mid \m(a,b,c) = c\}$. A subset $H\subset \Phi$ is \emph{convex} if $[a,b]\subset H$ for all $a,b\in H$.

For $A\subset\Phi$, define the \emph{convex hull}, denoted $\hull(A)$, to be the smallest convex subset of $\Phi$ containing $A$. Note that $\hull(\{a,b\})=[a,b]$ for $a,b\in\Phi$. Furthermore, for $A\subset\Phi$, define the \emph{join}, $J(A)=\bigcup_{a,b\in A}[a,b]$. Continuing inductively, we put $J^0(A)=J(A)$ and $J^i(A)=J(J^{i-1}(A))$. In general, there always exists some $p\in\N$ such that $J^p(A)=\hull(A)$, and moreover we know that $p$ can be taken to be no larger than the rank of $\Phi$ \cite[Lemma 5.5]{Bowditch:2013ab}.

A \emph{wall}, $W$, is a partition $\{H^-(W),H^+(W)\}$ of $\Phi$ into two non-empty convex subsets. We say that two walls $W,W'$ \emph{cross} if each of the sets $H^-(W)\cap H^-(W')$, $H^-(W)\cap H^+(W')$, $H^+(W)\cap H^-(W')$ and $H^+(W)\cap H^+(W')$ is non-empty.

We say that $\Phi$ has \emph{rank at most $d$} if there is no collection of $d+1$ pairwise crossing walls of $\Phi$.

By a \emph{topological median algebra} we mean a topological space $\Phi$ endowed with a structure of a median algebra $\m:\Phi^3\to \Phi$, such that $\m$ is continuous in the induced topology. When the topology on $\Phi$ comes from a metric $\rho$, we say that $\Phi$ is a \emph{metric median algebra}.

Let $\Phi$ be a metric median algebra as above. We also recall one of the conditions used in \cite{Bowditch:2014aa} to obtain the embedding result:
\begin{itemize}
  \item[(L2)]: There exists $K\geq 1$ such that for all $a,b,c,d\in\Phi$, $\rho(\m(a,b,c),\m(a,b,d))\leq K\rho(c,d)$.
\end{itemize}
Let us mention that the main embedding result of \cite{Bowditch:2014aa} states that if a metric median algebra $\Phi$ satisfies (L2), is Lipschitz path connected and is $\nu$-colourable\footnote{This is a more restrictive version of rank, which is equivalent to rank for intervals.}, then it bilipschitzly embeds into a product of $\nu$ $\R$-trees.

\subsection{Coarse median spaces}\label{subsec:defs-coarsemed}

In this subsection, we recall the definitions and facts related to coarse medians. For more details, we refer to \cite{Bowditch:2014aa,Bowditch:2013ab}.

Let $(X,\rho)$ be a metric space, and let $\m:X^3\to X$ be a ternary operation. We say that $\m$ is a \emph{coarse median} and that $(X,\rho,\m)$ is a \emph{coarse median space}, if the following conditions hold:
\begin{itemize}
  \item[(C1):] There are constants $K\geq1,H(0)\geq0$, such that for all $a,b,c,a',b',c'\in X$ we have
  $$
  \rho(\m(a,b,c),\m(a',b',c'))\leq K(\rho(a,a')+\rho(b,b')+\rho(c,c')) + H(0).
  $$
  \item[(C2):] There is a function $H:\N\to[0,\infty)$ with the following property. Suppose that $A\subseteq X$ with $1\leq|A|\leq p<\infty$. Then there is a finite median algebra $(\Pi,\m_\Pi)$ and maps $\pi:A\to \Pi$ and $\sigma:\Pi\to X$ such that for all $x,y,z\in \Pi$ we have
  $$
  \rho(\sigma\m_\Pi(x,y,z),\m(\sigma x,\sigma y,\sigma z)) \leq H(p)
  $$
  and
  $$
  \rho(a,\sigma\pi a)\leq H(p)
  $$
  for all $a\in A$.
\end{itemize}
We refer to $K,H$ as the \emph{parameters} of $(X,\rho,\m)$.

Without loss of generality, we may assume that $\m$ satisfies the axioms (M1) and (M2) by \cite[page 22]{Bowditch:2013ab}.

We say that $X$ has \emph{rank at most $d$} if we can always choose $\Pi$ to have rank at most $d$.

Let us recall the asymptotic cones from \cite[Section 9]{Bowditch:2013ab} and \cite[Section 8]{Bowditch:2014aa}: Let $(X,\rho,\m)$ be a coarse median space, let $(r_n)$ be a sequence of positive reals such that $r_n\to\infty$, $(x_n)\subset X$ be a sequence of points in $X$, and finally fix a non-principal ultrafilter on $\N$. With this data, we can construct an ultralimit $(X_\infty,\rho_\infty,\m_\infty)$ of pointed coarse median spaces $((X,\rho/r_n,\m),x_n)$. This ultralimit is referred to as an \emph{asymptotic cone} of $X$ (with the given data), and it is a complete\footnote{The completeness here refers to the metric.} metric median algebra satisfying (L2) (with the constant $K$ being the same as in the definition of coarse median). Moreover if $X$ has rank at most $d$, then $X_\infty$ also has a rank at most $d$.

\subsection{Property A}\label{subsec:def-A}

Property A is coarse geometric property of metric spaces (or more generally coarse spaces), first defined by G.~Yu \cite{Yu:2000aa} as a criterion that (for discrete countable groups) implies the Coarse Baum-Connes conjecture, and hence the Novikov conjecture. The catchphrase here is ``non-equivariant amenability'' or ``coarse amenability''. Since its inception, many equivalent formulations were discovered, including analytic (exactness of the reduced group C*-algebra, nuclearity of the uniform Roe algebra \cite{Guentner:2002aa,Ozawa:2000th}) and dynamical (admitting an amenable action on a compact topological space \cite{Higson:2000dp}).

We shall recall one of the possible definitions (the one used in Proposition \ref{prop:criterion}) for completeness; we refer to \cite{Willett:2009rt} for the whole spectrum.

Let $(X,\rho)$ be a uniformly locally finite discrete metric space. We say that $X$ has \emph{Property A}, if for all $R,\eps>0$ there exists a map $\xi: X\to \ell^1(X)$ from $X$ into the Banach space $\ell^1(X)$, such that
\begin{itemize}
\item $\|\xi(x)\|_1 = 1$ for all $x\in X$;
\item for all $x,y\in X$ with $\rho(x,y)\leq R$, we have $\|\xi(x)-\xi(y)\|_1\leq \eps$;
\item there exists $S>0$, such that for each $x\in X$, $\xi(x)$ is supported in the closed ball $B(x;S)$ around $x$ with radius $S$.
\end{itemize}

\subsection{Geodesicity}\label{subsec:geod}

We shall say that a metric space $(X,\rho)$ is \emph{quasigeodesic}, if there exist constants $G_1,G_2$, such that there exists a \emph{$(G_1,G_2)$-quasigeodesic} between any pair of points in $X$. Note that when $X$ is a quasigeodesic coarse median space, all its asymptotic cones are Lipschitz path connected. This is required for applying the embedding result of Bowditch \cite{Bowditch:2014aa} (and a blanket assumption in \cite{Bowditch:2014aa,Bowditch:2013ab}).

\section{A criterion}\label{sec:criter}

We extract a criterion from a proof of Brown and Ozawa \cite[Theorem 5.3.15]{Brown:2008qy} for proving Property A. Its proof is just an excerpt from \cite{Brown:2008qy}; which is in turn inspired by \cite{Kaimanovich:2004aa}.

\begin{proposition}\label{prop:criterion}
Let $X$ be a uniformly finite, discrete metric space. Suppose that we have an assignment of a set $S(x,k,l)\subset X$ to every $l\in\N$, $k\in\{1,\dots,3l\}$ and $x\in X$, such that:
  \begin{itemize}
    \item[(i)] For every $l\in\N$ there exists $S_l>0$, such that $S(x,k,l)\subset B(x,S_l)$ for all $x\in X$ and $k\in\{1,\dots,3l\}$.
    \item[(ii)] For every $x,y\in X$, $l\geq \rho(x,y)$, $k\in\{l+1,\dots,2l\}$, we have inclusions $S(x,k-\rho(x,y),l)\subset S(x,k,l)\cap S(y,k,l)$ and $S(x,k,l)\cup S(y,k,l)\subset S(x,k+\rho(x,y),l)$.
    \item[(iii)] There exists a function $p$, such that $|S(x,k,l)|\leq p(l)$ for every $x\in X$, $l\in\N$ and $k\in\{1,\dots,3l\}$, satisfying $\lim_{n\to\infty}p(n)^{1/n}=1$.
  \end{itemize}
  Then $X$ has property A.
\end{proposition}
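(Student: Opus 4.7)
The plan is to imitate the Kaimanovich--Brown--Ozawa construction of Property-A functions from nested families. For each $l \in \N$, I build a map $\xi_l \colon X \to \ell^1(X)$ by averaging the uniform probability measures on the layers $S(x,l+1,l),\dots,S(x,2l,l)$:
\[
\xi_l(x) \;:=\; \frac{1}{l} \sum_{k=l+1}^{2l} \frac{\mathbf{1}_{S(x,k,l)}}{|S(x,k,l)|}.
\]
Each summand is a probability measure, so $\|\xi_l(x)\|_1=1$ automatically, and condition (i) places its support inside $B(x,S_l)$. The remaining task is to show that for prescribed $R,\varepsilon>0$ one can choose $l$ large enough that $\|\xi_l(x)-\xi_l(y)\|_1<\varepsilon$ whenever $\rho(x,y)\le R$.

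For this, I fix $x,y\in X$ with $d:=\rho(x,y)\le l$ and analyse a single layer $k\in\{l+1,\dots,2l\}$. Condition (ii) supplies the nested sandwich $S(x,k-d,l)\subset S(y,k,l)\subset S(x,k+d,l)$ and the analogous $S(x,k-d,l)\subset S(x,k,l)\subset S(x,k+d,l)$, so both $k$-th-layer uniform probability measures are supported in the ``outer'' set $B_k:=S(x,k+d,l)$ and both dominate the uniform mass on the ``inner'' set $A_k:=S(x,k-d,l)$. A short computation from the elementary identity $\|u_A-u_B\|_1 = 2(1-|A|/|B|)$ for nonempty $A\subseteq B$, combined with the triangle inequality through $u_{B_k}$, bounds each layerwise distance by $4\bigl(1-|A_k|/|B_k|\bigr)$.

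The critical step is turning this into a sum that exploits (iii). Applying the elementary inequality $1-1/t\le\log t$ for $t\ge 1$ converts each term to $\log|B_k|-\log|A_k|$, and summing over $k\in\{l+1,\dots,2l\}$ gives a telescoping expression that collapses after the shifts $k\mapsto k\pm d$ to a boundary sum of $2d$ log-cardinalities. Bounding each such term by $\log p(l)$ via (iii), dividing by $l$, and using $d\le R$, I obtain
\[
\|\xi_l(x)-\xi_l(y)\|_1 \;\le\; \frac{C\,R\,\log p(l)}{l}
\]
for an absolute constant $C$. Condition (iii) is precisely the statement $\log p(l)/l\to 0$, so this tends to zero with $l$ and can be made smaller than $\varepsilon$.

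The main obstacle, and the reason for the apparently baroque normalisation by $|S(x,k,l)|$ inside the sum rather than a naive sum of indicators, is matching the gain from (iii) with the telescoping. A computation with bare indicators produces a factor of $p(l)$ in the numerator (from the maximum layer size), which cannot be absorbed in general; passing to uniform probability measures and invoking $1-1/t\le\log t$ is the device that replaces $p(l)$ by $\log p(l)$, thereby converting the subexponential-growth hypothesis (iii) into a usable estimate. A minor technicality is to guarantee $|S(x,k,l)|\ge 1$ so that the fractions and logarithms are defined; this is a mild nondegeneracy one may arrange, e.g.\ by insisting $x\in S(x,k,l)$ as is natural in the intended applications.
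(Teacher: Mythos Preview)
Your argument is correct and follows the same skeleton as the paper's: average the normalised indicators $\chi_{S(x,k,l)}$ over $k\in\{l+1,\dots,2l\}$, use (ii) to sandwich both $k$-th layers between $S(x,k-d,l)$ and $S(x,k+d,l)$, telescope, and finish with (iii). The one point of divergence is the analytical device that produces the telescoping: the paper applies the AM--GM inequality to obtain $\tfrac{1}{n}\sum_k |A_k|/|B_k| \ge \bigl(\prod_k |A_k|/|B_k|\bigr)^{1/n}$ and cancels the product directly, arriving at the bound $2\bigl(1-p(n)^{-2m/n}\bigr)$, whereas you use $1-1/t\le\log t$ to pass to a telescoping sum of logarithms and end with $CR\log p(l)/l$. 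These are two sides of the same coin (AM--GM is the exponential of Jensen for $\log$), and the nonemptiness caveat you flag is equally present, and equally harmless, in the paper's version.
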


To have some mental picture, let us recall that Brown and Ozawa apply this criterion to hyperbolic groups $\Gamma$, defining the sets as follows: fix a point $u\in\partial\Gamma$. Given $x,k,l$, the set $S(x,k,l)$ consists of points that are exactly $3l$ steps along a geodesic between a point within the $k$-ball around $x$ and $u$. With this definition, the conditions (i) and (ii) follow from the triangle inequality, and (iii) uses the stability of geodesics in hyperbolic spaces (in this case $p$ can be taken to be a linear function).

\begin{proof}
  Consider the Banach space $\ell^1(X)$ and for $A\subset X$ denote by $\chi_A\in\ell^1(X)$ the normalised characteristic function of $A$. Given $n\in\N$ and $x\in X$, define
  $$
  \xi_n(x) = \frac1n\sum_{k=n+1}^{2n}\chi_{S(x,k,n)}
  $$
  Note that $\|\xi_n(x)\|=1$ and $\supp(\xi_n(x)) \subset B(x,S_n)$ for all $x\in X$ by (i).

  To establish Property A, we use the formulation from \cite[Thm 1.2.4/2.]{Willett:2009rt}, recalled also in subsection \ref{subsec:def-A}. We need to show that for a fixed $m$, we have
  $$
  \lim_{n\to\infty}\sup_{\rho(x,y)=m}\|\xi_n(x)-\xi_n(y)\| = 0.
  $$
  First, observe that for any $A,B\subset X$, we have
  $$
  \|\chi_{A} - \chi_{B}\| = 2\left(1 - \frac{|A\cap B|}{\max\{|A|,|B|\}}\right) \leq 2\left(1- \frac{|A\cap B|}{|A\cup B|}\right).
  $$
  Take $x,y\in X$ with $\rho(x,y)=m$ and assume that $n\geq 2m$. Then for any $k\in\{n+1,\dots,2n\}$, applying (ii),
  $$
  \|\chi_{S(x,k,n)}-\chi_{S(y,k,n)}\| \leq 2\left(1- \frac{|S(x,k-m,n)|}{|S(x,k+m,n)|}\right).
  $$
  Consequently
  \begin{align*}
    \|\xi_n(x)-\xi_n(y)\| & \leq \frac1n\sum_{k=n+1}^{2n}\|\chi_{S(x,k,n)}-\chi_{S(y,k,n)}\|\\
      & \leq 2\left(1-\frac1n\sum_{k=n+1}^{2n}\frac{|S(x,k-m,n)|}{|S(x,k+m,n)|}\right)\\
      & \leq 2\left(1-\left(\prod_{k=n+1}^{2n}\frac{|S(x,k-m,n)|}{|S(x,k+m,n)|}\right)^{1/n}\right)\\
      & = 2\left(1-\left(\frac{\prod_{j=n+1-m}^{n+m}|S(x,j,n)|}{\prod_{j=2n+1-m}^{2n+m}|S(x,j,n)|}\right)^{1/n}\right)\\
      & \leq 2\left(1-p(n)^{-2m/n}\right).
  \end{align*}
  We have used the inequality between the arithmetic and geometric mean in the middle step, magic cancellation of many terms in the penultimate step and the last step uses (iii) plus a simple estimate of the sizes of sets by $1$ from below.
  By (iii), the last expression converges to $0$ as $n$ converges to $\infty$. We are done.
\end{proof}

\begin{remark}\label{rem:l-vs-k}
  In the condition (iii), we ask for a bound in terms of $l$. However, it is apparent from the proof that a bound in terms of $k$ with analogous property also suffices.
\end{remark}

\begin{remark}\label{lem:criterion-l-subseq}
  It is clear from the proof of the Proposition that we only need to define the sets $S(x,k,l)$ only for an infinite sequence of indices $l$ (and the corresponding $k\in\{1,\dots,3l\}$), not necessarily for all $l\in\N$.
\end{remark}

\section{CAT(0) cube complexes}\label{sec:cat0}

Proposition \ref{prop:criterion} allows us to quickly prove that finite dimensional CAT(0) cube complexes have property A. This was first proved by Brodzki et.\ al.\ \cite{Brodzki:2009aa} using a more combinatorial approach.

\begin{proposition}\label{prop:cat0cube}
  Let $X$ be a finite dimensional CAT(0) cube complex. Then $X$ has property A.
\end{proposition}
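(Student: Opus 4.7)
I will apply Proposition~\ref{prop:criterion}, defining the sets $S(x,k,l)$ by an analogue of the Brown--Ozawa construction: using the canonical normal cube paths of a CAT(0) cube complex in place of geodesic rays to a boundary point of a hyperbolic group. If $X$ is bounded then Property~A is trivial, so assume $X$ is unbounded. I fix a direction at infinity $\omega$, i.e.\ a point in the Roller boundary of $X$ --- a consistent orientation of the half-spaces such that every vertex is separated from $\omega$ by infinitely many walls; since $X$ is infinite and finite-dimensional, such an $\omega$ exists by a standard compactness argument on orientations of half-spaces. For each vertex $z$ this yields a canonical normal cube path $\gamma_z\colon\N\to X^{(0)}$ with $\gamma_z(0)=z$, each $\gamma_z(i+1)$ obtained from $\gamma_z(i)$ by traversing a maximal cube whose walls all separate $\gamma_z(i)$ from $\omega$. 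For $l\in\N$, $k\in\{1,\dots,3l\}$ and $x\in X^{(0)}$ I then set
\[
  S(x,k,l)=\{\gamma_{x'}(3l) : x'\in B(x,k)\cap X^{(0)}\}.
\]

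Conditions (i) and (ii) of the criterion will then be routine bookkeeping. For (i), since $X$ has dimension $d$ each cube-path step has edge-length at most $d$, so $\rho(x,\gamma_{x'}(3l))\le k+3ld$ and we may take $S_l=3l(d+1)$. For (ii), whenever $\rho(x,y)=m$ and $v=\gamma_{x'}(3l)\in S(x,k-m,l)$ with $x'\in B(x,k-m)$, the triangle inequality gives $\rho(y,x')\le k$, placing $x'\in B(y,k)$ and hence $v\in S(y,k,l)$; the containment $S(x,k-m,l)\subset S(x,k,l)$ is automatic, and the ``union'' inclusion $S(x,k,l)\cup S(y,k,l)\subset S(x,k+m,l)$ is the same calculation with the roles of $x$ and $y$ swapped.

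The genuine content lies in condition (iii): a bound on $|S(x,k,l)|$ in terms of $k$ (permissible by Remark~\ref{rem:l-vs-k}) satisfying $p(k)^{1/k}\to1$, for which a polynomial bound suffices. The geometric picture is that as $x'$ varies over $B(x,k)$ the normal cube paths $\gamma_{x'}$ all march towards $\omega$ in near-lockstep, so their $3l$-th vertices lie in a thin slab transverse to $\omega$ of ``thickness'' controlled by $k$ alone; finite rank $d$ then forces this slab to contain only polynomially many vertices, ideally of shape $|S(x,k,l)|\le C_d(k+1)^d$. In rank one (trees) this recovers the linear bound implicit in Brown and Ozawa's argument, where nearby rays to a fixed boundary point eventually merge.

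The step I expect to be the main obstacle is exactly this polynomial bound: it requires a stability statement for normal cube paths issuing from vertices in a $k$-ball and heading towards a common direction at infinity, together with a counting argument for the resulting transverse cross-section, and this is the unique place where finite dimensionality of $X$ is used in an essential way. Once in place, the argument will serve as the prototype that Section~\ref{sec:coarsemed} coarsifies to finite-rank coarse median spaces, with asymptotic cones and the embedding results for metric median algebras standing in for the explicit normal cube paths used here.
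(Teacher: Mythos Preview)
Your outline follows the paper's template, but it leaves condition~(iii) --- the only substantive step --- unproved, and your choice of a Roller boundary point $\omega$ makes that step harder than it needs to be. The paper's proof uses exactly your construction with one simplification that dissolves the obstacle you flag: it takes a \emph{finite} basepoint $x_0\in X$ rather than a point at infinity, and defines $S(x,k,l)$ as the set of $3l$-th vertices on the normal cube paths from $y\in B(x,k)$ to $x_0$ (taking $x_0$ itself if the path is shorter than $3l$). Conditions (i) and (ii) go through exactly as you say.

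For (iii) the paper needs no stability or fellow-travelling statement at all. It proves directly that $S(x,k,l)\subset [x,x_0]$: every hyperplane $H$ with $\{y\}|_H\{x,x_0\}$ must, by maximality of each step of the normal cube path, be crossed within the first $\rho(x,y)\le k\le 3l$ steps, so the $3l$-th vertex lies on the $\{x,x_0\}$ side of every such $H$. Since the interval $[x,x_0]$ embeds isometrically into $\R^d$ (\cite[Theorem~1.14]{Brodzki:2009aa}), it has polynomial growth, and $S(x,k,l)\subset [x,x_0]\cap B(x,6ld)$ gives a polynomial bound in $l$ of degree~$d$. No Roller boundary, no existence argument for $\omega$, no normal cube paths to infinity, and no transverse-slab counting are needed.

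Your boundary version can in principle be completed --- the same hyperplane-crossing argument shows that the walls separating $x'$ from both $x$ and $\omega$ are used up early --- but you then have to handle the walls with $\{x',\omega\}|_H\{x\}$, which are never crossed by $\gamma_{x'}$; this is where the extra bookkeeping you anticipate would enter. The finite basepoint avoids this entirely, and it is also the version that Section~\ref{sec:coarsemed} actually coarsifies (the basepoint $x_0$ there is again a point of $X$, not a boundary point).
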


\begin{proof}
  Fix a base vertex $x_0\in X$. Given a vertex $x\in X$, $l\in\N$ and $k\in\{1,\dots,3l\}$, consider the normal cube path from $y$ to $x_0$, where $\rho(y,x)\leq k$. We define the set $S(x,k,l)$ to contain the $3l$-th vertex on such a normal cube path (or $x_0$ if we run out of space). Note that the conditions (i) and (ii) from Proposition \ref{prop:criterion} are automatically satisfied, courtesy of the triangle inequality. To be more precise, if $z\in S(x,k,l)$, then $\rho(x,z)\leq 6ld$, where $d=\dim(X)$.

  To prove the condition (iii) of Proposition \ref{prop:criterion}, we shall argue that if $z\in S(x,k,l)$, then $z\in [x,x_0]$. Or, equivalently:
  
  \textbf{Claim:} Every half-space containing both $x$ and $x_0$ contains also $z$.

  Each hyperplane that we need to consider (i.e. such that one of the associated half-spaces contains both $x$ and $x_0$) either separates $x,x_0$ from $y$ or it does not. In the latter case, the same half-space also clearly contains $z$, so it remains to deal with the former case.

  Denote by $C_0,C_1,\dots,C_m$ the normal cube path from $y$ to $x_0$, and denote by $y=v_0,v_1,\dots, v_m=x_0$ the vertices on this cube path. We shall argue that any hyperplane separating $y$ from $x,x_0$ is ``used'' within the first $\rho(x,y)$ steps on the cube path. Suppose that the cube $C_i$ does not cross any hyperplane $H$ with $\{y\}|_H\{x,x_0\}$. Hence every hyperplane $K$ crossing $C_i$ satisfies $\{y,x\}|_K\{x_0,v_{i+1}\}$. If there was a hyperplane $L$ with $\{y\}|_L\{x,x_0\}$ which was not ``used'' before $C_i$ on the cube path, then necessarily $\{y,v_{i+1}\}|_L\{x,x_0\}$, hence $L$ crosses all the hyperplanes $K$ crossing $C_i$. This contradicts the maximality of this step on the normal cube path. Thus there is no such $L$, and so all the hyperplanes $H$ with $\{y\}|_H\{x,x_0\}$ must be crossed within the first $\rho(x,y)$ steps (as there is at most $\rho(x,y)$ of such hyperplanes).

  Since $z$ is the $3l$-th vertex on the cube path and $\rho(x,y)\leq k\leq 3l$, all the hyperplanes $H$ with $\{y\}|_H\{x,x_0\}$ must have been crossed before $z$. Thus any such $H$ actually also satisfies $\{y\}|_H\{x,x_0,z\}$. We have proved our claim.

  Coming back to showing the condition (iii) of Proposition \ref{prop:criterion}, observe that the interval $[x,x_0]$ embeds isometrically into the cube complex $\R^d$ \cite[Theorem 1.14]{Brodzki:2009aa}, hence it has polynomial growth. This means that as $S(x,k,l) \subset [x,x_0]\cap B(x,6ld)$, its cardinality is bounded by a polynomial in $l$ (of degree $d$). This finishes the proof.
\end{proof}

For the record, we note that dropping the finite dimensionality assumption renders the statement false, namely infinite dimensional CAT(0) cube complexes do not have Property A; this follows from \cite{Nowak:2007aa}, as they contain isometric copies of $(\Z/2\Z)^n$ for arbitrarily large $n$.

\section{Median algebras}\label{sec:topmed}

\begin{definition}\label{def:mu-symbol}
  Let $\Phi$ be a median algebra. Let $n\geq 2$ and $x_1,\dots,x_n,b\in \Phi$. Define
  $$
  \m(x_1;b) := x_1
  $$
  and inductively for $1\leq k < n-1$
  $$
  \m(x_1,\dots,x_{k+1};b) := \m(\m(x_1,\dots,x_{k};b), x_{k+1},b).
  $$
  Note that this definition ``agrees'' with the original median map $\mu$, since $\mu(x_1,x_2;b) = \m(x_1,x_2,b)$.
\end{definition}

Intuitively, $\m(x_1,\dots,x_n;b)$ should be thought of as a projection of $b$ onto the $\hull\{x_1,\dots,x_n\}$, just as $\m(x_1,x_2,b)$ is the projection of $b$ onto $[x_1,x_2]$. However, we do not prove this in this note (but see Lemma \ref{lem:mu-properties}).

\begin{lemma}\label{lem:mu-symmetric}
  The $\m$ symbol from Definition \ref{def:mu-symbol} is symmetric in $x_1,\dots,x_n$.
\end{lemma}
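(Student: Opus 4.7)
Plan: I would argue by induction on $n$. The base case $n=2$ is immediate, since $\m(x_1,x_2;b)=\m(x_1,x_2,b)$ is symmetric in $x_1,x_2$ by (M1). For the inductive step, applying the inductive hypothesis to the inner expression in the recursive definition $\m(x_1,\ldots,x_n;b)=\m(\m(x_1,\ldots,x_{n-1};b),x_n,b)$ gives invariance under all adjacent transpositions $(i,i{+}1)$ with $i\le n-2$, and since adjacent transpositions generate $S_n$, only the swap $x_{n-1}\leftrightarrow x_n$ remains. Unfolding the recursion once more with $s:=\m(x_1,\ldots,x_{n-2};b)$, this reduces to proving the three-variable identity
\[
\m(\m(s,u,b),v,b)=\m(\m(s,v,b),u,b)\qquad\text{for all }s,u,v,b\in\Phi.\tag{$\star$}
\]

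To tackle $(\star)$, I would first bring each side into a symmetric form using (M1) and (M3). By (M1) the left-hand side equals $\m(v,b,\m(s,u,b))$, and then (M3) applied to the inner triple yields $\m(\m(s,v,b),\m(u,v,b),b)$; symmetrically the right-hand side of $(\star)$ becomes $\m(\m(s,u,b),\m(u,v,b),b)$. Setting $A=\m(s,v,b)$, $B=\m(s,u,b)$ and $C=\m(u,v,b)$, the task reduces to showing $\m(A,C,b)=\m(B,C,b)$.

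At this point I would invoke the standard fact derivable from (M1)--(M3) that $\m(x,y,z)\in[x,y]\cap[y,z]\cap[x,z]$ (the containment in $[x,y]$ follows from the one-line calculation $\m(x,y,\m(x,y,z))=\m(\m(x,y,x),\m(x,y,y),z)=\m(x,y,z)$ using (M3) and (M2), and the other two cases follow by (M1)). In particular $C=\m(u,v,b)\in[u,b]\cap[v,b]$, so $\m(C,b,u)=\m(C,b,v)=C$. Expanding $\m(A,C,b)$ by (M1) and (M3) (applied to $\m(C,b,\m(s,v,b))$ with inner triple $(s,v,b)$) then gives
\[
\m(A,C,b)=\m(\m(C,b,s),\m(C,b,v),b)=\m(\m(C,b,s),C,b),
\]
and the identical computation with $B$ in place of $A$ uses $\m(C,b,u)=C$ to yield $\m(B,C,b)=\m(\m(C,b,s),C,b)$ as well, proving $(\star)$.

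The main obstacle I anticipate is $(\star)$; once one spots that both sides collapse to the common expression $\m(\m(C,b,s),C,b)$, the interval containment of $C$ finishes the job, and the inductive bookkeeping reducing $n$ variables to three is routine.
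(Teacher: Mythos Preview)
Your proof is correct and follows the same overall strategy as the paper: reduce by induction to the three-variable identity $(\star)$, i.e.\ $\m(\m(s,u,b),v,b)=\m(\m(s,v,b),u,b)$, and then verify $(\star)$ from the median axioms.

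The only difference is in how $(\star)$ is established. You expand both sides via (M3) and then collapse them to a common expression $\m(\m(C,b,s),C,b)$ using the interval containment of $C=\m(u,v,b)$. The paper instead dispatches $(\star)$ in a single stroke: writing $b=\m(v,b,b)$ by (M2), one has
\[
\m(\m(s,u,b),v,b)=\m(\m(v,b,s),\m(v,b,b),u)=\m(v,b,\m(s,b,u))
\]
by (M3), and by (M1) the right-hand side is $\m(\m(s,v,b),u,b)$. Your route is a bit longer but entirely sound; the paper's trick of inserting $\m(v,b,b)=b$ to match the (M3) pattern directly is worth noting as a shortcut.
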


\begin{proof}
  Recalling that interchanging the points in $\m(\cdot,\cdot,\cdot)$ is one of the axioms of a median algebra, it is clearly sufficient to prove that for $n=3$, we can switch $x_2$ with $x_3$. However, applying axioms of median algebras, we have that
  \begin{align*}
    \m(x_1,x_2,x_3;b)
    &=\m(\m(x_1,x_2,b),x_3,b)\\
    &= \m(\m(x_3,b,x_1),\m(x_3,b,b), x_2)\\
    &= \m(\m(x_1,x_3,b),x_2,b)\\
    &= \m(x_1,x_3,x_2;b).
  \end{align*}
  We are done.
\end{proof}

In fact, it is easy to see that $[x_1,b]\cap[x_2,b] = [\m(x_1,x_2,b),b)]$ and then by induction that $\bigcap_{k=1}^n[x_k,b] = [\m(x_1,\dots,x_n;b),b]$.

\begin{lemma}\label{lem:mu-properties}
  Let $\Phi$ be a median algebra. Let $x_1,\dots,x_n,b\in\Phi$.
  \begin{itemize}
    \item[(i)] A wall separates $b$ and $x_1,\dots,x_n$ if and only if it separates $b$ and $\m(x_1,\dots,x_n;b)$.
    \item[(ii)] If $\m(x_1,\dots,x_{n-1};b) \not= \m(x_1,\dots,x_n;b)$ then there exists a wall separating $x_1,\dots,x_{n-1}$ from $x_n,b$.
    \item[(iii)] If, in addition, $\Phi$ has rank at most $d$, then there exists a subset $\{y_1,\dots,y_k\}\subseteq \{x_1,\dots,x_n\}$ with $k\leq d$, such that $\m(y_1,\dots,y_k;b)=\m(x_1,\dots,x_n;b)$.
    \item[(iv)] Assume that $a\in\Phi$ and $x_1,\dots,x_n\in[a,b]$. Then $\{x_1,\dots,x_n\}\subset [a,\m(x_1,\dots,x_n;b)]$.
  \end{itemize}
  \end{lemma}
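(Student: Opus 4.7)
All four parts rest on two core ingredients: the inductive definition of $\m(x_1,\dots,x_n;b)$, and the ``majority rule'' that for any wall $W=\{H^+,H^-\}$ the median $\m(p,q,r)$ lies in whichever half-space of $W$ contains at least two of $p,q,r$ (immediate from convexity of half-spaces combined with $\m(p,q,r)\in[p,q]$). I will also use the standard fact that in a median algebra any two distinct points are separated by some wall.

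For (i) I would induct on $n$, writing $\mu_k=\m(x_1,\dots,x_k;b)$. The case $n=1$ is trivial since $\mu_1=x_1$. For the step, $\mu_{k+1}=\m(\mu_k,x_{k+1},b)$, so by the majority rule a wall $W$ separates $b$ from $\mu_{k+1}$ iff it separates $b$ from both $\mu_k$ and $x_{k+1}$; the induction hypothesis converts this into ``$W$ separates $b$ from each of $x_1,\dots,x_{k+1}$''. For (ii), pick a wall $W$ separating the distinct points $\mu_{n-1}$ and $\mu_n$. Since $\mu_n=\m(\mu_{n-1},x_n,b)\in[b,\mu_{n-1}]$, convexity forces $b$ onto the same side of $W$ as $\mu_n$; call this side $H^-$. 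The majority rule at $\mu_n$ (with $\mu_{n-1}\in H^+$, $b\in H^-$) then forces $x_n\in H^-$. On the other hand, $W$ separates $b$ from $\mu_{n-1}$, so part (i) places each of $x_1,\dots,x_{n-1}$ in $H^+$, and hence $W$ separates $\{x_1,\dots,x_{n-1}\}$ from $\{x_n,b\}$.

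For (iii), I induct on $n$: if $n\leq d$ there is nothing to do, otherwise I try to identify a redundant $x_i$. Suppose no such $x_i$ exists; then for each $i$, moving $x_i$ to the last slot via Lemma \ref{lem:mu-symmetric} and applying (ii) produces a wall $W_i=\{A_i,B_i\}$ with $A_i\supseteq\{x_i,b\}$ and $B_i\supseteq\{x_k:k\neq i\}$. For $i\neq j$ these walls pairwise cross: $b\in A_i\cap A_j$, $x_i\in A_i\cap B_j$, $x_j\in B_i\cap A_j$, and (provided $n\geq 3$) any $x_k$ with $k\notin\{i,j\}$ lies in $B_i\cap B_j$. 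The resulting $n\geq d+1$ pairwise crossing walls contradict the rank bound, so some $x_i$ is in fact redundant and the induction proceeds. I expect the delicate point here to be precisely the non-emptiness of $B_i\cap B_j$: the crossing argument needs $n\geq 3$, so the corner case $(d,n)=(1,2)$ would require separate attention (or is simply avoided in the intended applications).

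Finally, for (iv), I use the wall characterisation of intervals: $x_i\in[a,m]$ (with $m=\m(x_1,\dots,x_n;b)$) iff every wall having $a$ and $m$ in the same half-space also contains $x_i$ there. So fix a wall $W$ with $a,m\in H^+$. If $b\in H^+$ as well, then $x_i\in[a,b]\subseteq H^+$ by convexity. If instead $b\in H^-$, then $W$ separates $b$ from $m$, so (i) forces each $x_j$, in particular $x_i$, into $H^+$.
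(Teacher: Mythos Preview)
Your arguments for (i), (ii) and (iv) are correct. They differ in style from the paper's: the paper proves the forward direction of (i) by observing $\m(x_1,\dots,x_n;b)\in\hull\{x_1,\dots,x_n\}$ and the converse by a short contradiction; for (ii) it shows $c=\m(x_1,\dots,x_{n-1};b)\notin[x_n,b]$ and invokes Bowditch's convex separation lemma; and for (iv) it gives a one-line median-axiom computation $\m(a,x,\m(x,y,b))=x$. Your uniform ``majority rule'' approach is more self-contained (no external separation lemma needed) and makes the logic of (iv) more transparent by reducing it to (i), at the cost of relying on the wall characterisation of intervals.

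For (iii) your argument is essentially the same as the paper's, and your instinct about the corner case is correct: the paper's ``they clearly intersect'' also tacitly uses a third point $x_k$ to fill the fourth quadrant $B_i\cap B_j$, so both proofs need $n\geq 3$. In fact (iii) is false as stated when $d=1$ and $n=2$: in the tripod with leaves $x_1,x_2,b$ and centre $c$, one has $\m(x_1,x_2;b)=c\notin\{x_1,x_2\}$, so no singleton suffices. This does not damage the paper, however. The only use of (iii) is in Proposition~\ref{prop:top-hull}, where the points $x_i$ all lie in a fixed interval $[a,b]$; in rank~$1$ such an interval is totally ordered, so for any $x_1,x_2\in[a,b]$ one of them equals $\m(x_1,x_2,b)$ and is therefore redundant. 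If you want your write-up to be airtight, simply note that in the inductive step one may assume $n\geq d+1\geq 3$ when $d\geq 2$, and handle $d=1$ separately under the additional hypothesis $x_1,\dots,x_n\in[a,b]$ (which is the only case needed).
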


\begin{proof}
  Since $\m(x,y,b)\in [x,y] = J(\{x,y\})$, we can easily prove by induction that $\m(x_1,\dots,x_n;b)\in J^{n-1}(\{x_1,\dots,x_n\})\subset\hull\{x_1,\dots,x_n\}$. The statement ``$\Longrightarrow$'' in (i) follows. For the converse, assume for contradiction that there exists a wall $W$ that separates $b$ and $\m(x_1,\dots,x_n;b)$ but does not separate $b$ from (say, using Lemma \ref{lem:mu-symmetric}) $x_n$. As half-spaces are convex, the whole interval $[b,x_n]$ is in the same half-space than $b$. But as $\m(\cdot, x_n,b)\in [b,x_n]$, this contradicts the assumption that $b$ separates $\m(x_1,\dots,x_n;b)=\m(\m(x_1,\dots,x_{n-1};b),x_n,b)$.

  For (ii), denote $c=\m(x_1,\dots,x_{n-1};b)$ and note that $\m(x_1,\dots,x_n;b)=\m(c,x_n,b)$. Hence $c\not=\m(x_1,\dots,x_n;b)$ implies $c\not\in[x_n,b]$. As $\{c\}$ and $[x_n,b]$ are convex, this implies, by \cite[Lemma 6.1]{Bowditch:2013ab}, that there is a wall separating $c$ and $x_n,b$. By (i), this wall separates $x_1,\dots,x_{n-1}$ and $x_n,b$.

  For (iii), we proceed by contradiction. Assume that there are at least $d+1$ points in $\{x_1,\dots,x_n\}$ which cannot be removed from the expression $\m(x_1,\dots,x_n;b)$ without changing the result. The previous part of the lemma, together with Lemma \ref{lem:mu-symmetric}, implies that there exist at least $d+1$ different walls which all intersect (for instance, if one wall separates $x_1,\dots,x_{n-1}$ and $x_n,b$, and another one separates $x_1,\dots,x_{n-2},x_n$ and $x_{n-1},b$, they clearly intersect). This contradicts the rank assumption (see \cite[Proposition 6.2]{Bowditch:2013ab}).

  The part (iv) follows by induction from the following statement: if $x,y\in[a,b]$, then $x,y\in[a,\m(x,y,b)]$ (so in particular $[a,x]\subset [a,\m(x,y,b)]$). It is of course sufficient to prove that $x\in [a,\m(x,y,b)]$, which is done using median axioms as follows:
  \begin{align*}
    \m(a,x,\m(x,y,b)) &=
    \m(\m(a,x,x),\m(a,x,b),y)
    = \m(x,x,y) = x.
  \end{align*}
  We are done.
\end{proof}

\begin{lemma}\label{lem:switch-in-int}
  Let $\Phi$ be a median algebra and let $a,b,x,y\in \Phi$ satisfy $x,y\in[a,b]$. Then $y\in[a,x]$ implies $x\in[y,b]$.
\end{lemma}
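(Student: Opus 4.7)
The plan is to prove $x \in [y,b]$ by directly verifying $\m(y,b,x) = x$, using only the median axioms (M1), (M2), (M3) and the two given identities $\m(a,b,x)=x$ (from $x\in[a,b]$) and $\m(a,x,y)=y$ (from $y\in[a,x]$).

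First I would substitute $y = \m(a,x,y)$ into $\m(y,b,x)$ to obtain the expression $\m(\m(a,x,y),b,x)$. The next step is to rewrite this using (M1) in the form $\m(b,x,\m(a,x,y))$ so that the nested median sits in the third slot, which is exactly the shape needed to apply (M3). Applying (M3) with the first two arguments $b,x$ and inner triple $(a,x,y)$ gives
\[
\m(b,x,\m(a,x,y)) \;=\; \m(\m(b,x,a),\, \m(b,x,x),\, y).
\]

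The final step is to simplify the two inner medians. By (M2), $\m(b,x,x)=x$; and by (M1) together with the hypothesis $x\in[a,b]$, we have $\m(b,x,a)=\m(a,b,x)=x$. Hence the right-hand side collapses to $\m(x,x,y)=x$, using (M2) again. This yields $\m(y,b,x)=x$, i.e.\ $x\in[y,b]$, as required.

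There is no real obstacle here: the statement is essentially a symmetry assertion about intervals in a median algebra, and the computation goes through in three lines once the correct application of (M3) is spotted. The only mild subtlety is choosing the right permutation of arguments so that (M3) produces two inner medians that both simplify via (M2) and the hypothesis $x\in[a,b]$; the computation in the proof of Lemma \ref{lem:mu-properties}(iv) already displays exactly this pattern, so the argument is essentially a template reuse.
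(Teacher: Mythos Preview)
Your proof is correct and is essentially identical to the paper's own argument: the paper writes the same one-line chain $\m(y,b,x) = \m(\m(a,x,y),b,x) = \m(\m(b,x,a),\m(b,x,x),y) = \m(x,x,y) = x$, and you have simply spelled out which axiom justifies each step.
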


\begin{proof}
  Using the median axioms and lemma's assumptions, we have
  $$
  \m(y,b,x) = \m(\m(a,x,y),b,x) = \m(\m(b,x,a),\m(b,x,x),y) = \m(x,x,y) = x.
  $$
\end{proof}

\begin{proposition}\label{prop:top-hull}
  Let $\Phi$ be a topological median algebra of rank at most $d$. Given an interval $[a,b]\subset\Phi$ and a compact set $C\subset[a,b]$, there exists $h_1,\dots,h_d\in C$, such that $C\subset [a,\m(h_1,\dots,h_d;b)]$.

  If $\Phi$ is moreover a metric median algebra satisfying the condition (L2), then we have $\rho(a,\m(h_1,\dots,h_d;b))\leq 3^dK^d\max_{1\leq i\leq d}\rho(a,h_i) \leq 3^dK^d\sup_{h\in C}\rho(a,h)$.
\end{proposition}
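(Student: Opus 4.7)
My plan is to exploit the preorder on $[a,b]$ given by $x \preceq y :\Leftrightarrow x \in [a,y]$, which is reflexive and transitive (short computations with M3 and M2). For a finite subset $F \subset C$ I write $m_F := \m(F;b)$, well-defined by Lemma \ref{lem:mu-symmetric}. A short induction shows $m_F \in [a,b]$; moreover, using $m_{F \cup \{h\}} = \m(m_F, h, b) \in [m_F, b]$ together with the symmetric form of Lemma \ref{lem:switch-in-int} (interchange the roles of $a$ and $b$), one obtains the monotonicity $F \subset F' \Rightarrow m_F \preceq m_{F'}$.

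Set $M := \{m_F : F \subset C,\ |F| \leq d\}$. By Lemma \ref{lem:mu-properties}(iii) every $m_F$ with $F \subset C$ finite already lies in $M$; allowing repeated entries, $M$ is also the image of $C^d$ under the continuous map $(h_1,\dots,h_d)\mapsto \m(h_1,\dots,h_d;b)$, and hence is compact. Directedness of $(M, \preceq)$ follows: given $m_{F_1}, m_{F_2} \in M$, the element $m_{F_1 \cup F_2}\in M$ dominates both by monotonicity. Continuity of $\m$ makes each upper-set $U_y := \{z \in M : y \preceq z\}$ closed in $M$, and for any $\preceq$-chain $\{y_\alpha\}\subset M$ the family $\{U_{y_\alpha}\}$ has the finite intersection property (any finite subfamily is witnessed by the largest $y_{\alpha}$ among them); by compactness this gives a common element $y^* \in M$, which is a $\preceq$-upper bound for the chain. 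Zorn's lemma now produces a $\preceq$-maximal $m^* \in M$, which is in fact the $\preceq$-top by directedness. Writing $m^* = \m(h_1,\dots,h_d;b)$ with $h_i \in C$ and using that each $c \in C$ equals $m_{\{c\}}\in M$ and so satisfies $c \preceq m^*$, one concludes $C \subset [a, m^*]$.

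For the metric bound, set $m_0 := a$ and $m_k := \m(m_{k-1}, h_k, b)$ for $k \geq 1$, so that $m_1 = h_1$ and $m_d = \m(h_1,\dots,h_d;b)$. Since $a = \m(a,a,b)$ by M2 and (L2) iterates coordinatewise to yield $K$-Lipschitzness in each argument of $\m$, we obtain $\rho(a, m_k) \leq K \rho(a, m_{k-1}) + K \rho(a, h_k)$. Setting $R := \max_{1\leq i\leq d}\rho(a, h_i)$, a straightforward induction using $K \geq 1$ yields $\rho(a, m_d) \leq d\,K^{d-1} R \leq 3^d K^d R$, as required.

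The main obstacle, I expect, is step 2: producing a genuine $\preceq$-maximum $m^*$ of $M$. This requires a careful combination of continuity of $\m$ (to make the upper-sets closed), compactness of $M$ (to run the finite-intersection argument and hence Zorn), and directedness (to upgrade maximal to maximum). The rest is essentially routine bookkeeping with the median axioms and the lemmas already established.
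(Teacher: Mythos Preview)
Your argument is correct and uses the same ingredients as the paper's proof: the continuous map $C^d\to\Phi$, $(h_1,\dots,h_d)\mapsto\m(h_1,\dots,h_d;b)$, directedness of its image via Lemma~\ref{lem:mu-properties}(iii), closedness of the relevant sets, and compactness via the finite intersection property. The only notable difference is your detour through Zorn's lemma, which is unnecessary: since $(M,\preceq)$ is directed, the \emph{entire} family $\{U_y:y\in M\}$ already has the finite intersection property (given $U_{y_1},\dots,U_{y_n}$, any common upper bound of $y_1,\dots,y_n$ in $M$ lies in all of them), so compactness directly produces a point in $\bigcap_{y\in M}U_y$, which is your $\preceq$-maximum. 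This is exactly what the paper does, working instead with the sets $A_\xi=\{\eta\in C^d:\m(\xi;b)\preceq\m(\eta;b)\}$ in $C^d$. Your metric estimate is also organised slightly differently and in fact yields the sharper bound $dK^{d-1}\max_i\rho(a,h_i)$, which of course still implies the stated $3^dK^d$ bound.
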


\begin{proof}
  % \red{First part by compactness, e.g. like this (too complicated?). In the end it's only used for finite $C$, so maybe there's a quicker proof. But I kinda like this one.}
  Consider the compact space $C^d$. Given a tuple $\xi\in C^d$, write $\m(\xi;b)$ for the short. Given $\xi\in C^d$, define
  $$
  A_{\xi} = \left\{\eta\in C^d\mid \m(\eta;b)\in [\m(\xi;b),b] \right\} =
   \left\{ \eta\in C^d \mid [a,\m(\xi;b)]\subset[a,\m(\eta;b)] \right\}.
  $$
  Note that the two conditions are equivalent: if $\m(\eta;b)\in[\m(\xi;b),b]$, then by Lemma \ref{lem:switch-in-int} $\m(\xi;b)\in[a,\m(\eta;b)]$, hence $[a,\m(\xi;b)]\subset [a,\m(\eta;b)]$. Conversely, the last inclusion implies $\m(\xi;b)\in [a,\m(\eta;b)]$, so again by Lemma \ref{lem:switch-in-int} we have $\m(\eta;b)\in[\m(\xi;b),b]$.

  Observe that each set $A_{\xi}$ is closed: In fact it is exactly the inverse image of the closed\footnote{Intervals are closed in topological median algebras; this just uses continuity of $\mu$.} set $[\m(\xi;b),b]\subset \Phi$ under the continuous map $\m(\cdot;b): C^d \to\Phi$.

  Finally, the collection $\{A_{\xi}\mid \xi\in C^d\}$ of subsets of $C^d$ has the finite intersection property. Given $\xi,\eta\in C^d$, by Lemma \ref{lem:mu-properties} (iii), there is $\omega\in C^d$, such that $\m(\omega;b)=\m(\xi\cup\eta;b)$. However, from the definition of $\mu$ and Lemma \ref{lem:mu-symmetric}, we know that $\m(\xi\cup\eta;b)$ belongs to both $[\m(\xi;b),b]$ and $[\m(\eta;b),b]$. In other words, $\omega \in A_{\xi}\cap A_{\eta}$.

  Now, as $C^d$ is compact, there exists $\zeta\in\bigcap_{\xi\in C^d}A_{\xi}$. This means that $[a,\m(\xi;b)]\subset [a,\m(\zeta;b)]$ for all $\xi\in C^d$. In particular, by Lemma \ref{lem:mu-properties} (iv), $\xi\subset [a,\m(\xi;b)]\subset [a,\m(\zeta;b)]$ for all $\xi\in C^d$, so $[a,\m(\zeta;b)]$ contains all the points of $C$. Now just enumerate $\zeta$ as $h_1,\dots,h_d$.

  For the second part of the proposition, we do inductive estimates using (L2). Denote $T=\max_{1\leq i\leq d}\rho(a,h_i)$. Then, as the first step, $\rho(a,\m(h_1;b))=\rho(a,h_1)\leq T$. We show inductively that $\rho(a,\m(h_1,\dots,h_i;b))\leq 3^{i-1}K^{i-1}T$. Assuming this inequality for $i$, denoting $\m(h_1,\dots,h_i;b)=g_i$ we estimate %\red{quite stupidly}
  \begin{align*}
    \rho(a,\m(h_1,\dots,h_{i+1};b)) &= \rho(a,\m(g_i,h_{i+1};b))\\
    &\leq \rho(a,g_i) + \rho(\m(g_i,g_i,b),\m(g_i,h_{i+1},b))\\
    &\leq 3^{i-1}K^{i-1}T + K\rho(g_i,h_{i+1})\\
    &\leq 3^{i-1}K^{i-1}T + K\left(3^{i-1}K^{i-1}T + T\right)\\
    &= T\left(3^{i-1}K^i + 3^{i-1}K^{i-1} + K\right) \leq 3^iK^iT.
  \end{align*}
  We are done.
\end{proof}

% \red{I suppose the main point can be proved more easily using embedding into $\R^d$, and we're going to need this anyway to prove A. But I like doing this by hand.}

\section{Coarse medians}\label{sec:coarsemed}

We shall adapt the idea of `moving deep into the interval' from the CAT(0) cube complex setting to the more general coarse median spaces.

To explain the idea, consider two points $a,b$ and the context--appropriate notion of the interval $[a,b]$. In the CAT(0) cube complex case, we have moved deep into this interval by stepping sufficiently far along the cube path from $a$ to $b$. In the coarse median case we shall be, roughly speaking, looking for `the other end' of the convex hull of $B(a,l)\cap[a,b]$ (cf.\ the second bullet in Corollary \ref{cor:coarse-hull}). Following the suit of \cite{Bowditch:2014aa}, this is done by going to the asymptotic cone (where the results of Section \ref{sec:topmed} can be applied).

\smallskip

We begin by fixing a fair amount of notation.

For the rest of this section, when we say that $X$ is a coarse median space, we mean that $X$ is a coarse median space, with metric denoted $\rho$, the median function denoted $\m$, with parameters $K$ and $H$. These will be fixed throughout.

When convenient, we shall be using the notation $x\sim_s y$ for $\rho(x,y)\leq s$.

Given $\tau\geq0$ and $a,b\in X$, we shall denote by $[a,b]_\tau$ the coarse interval between $a$ and $b$, i.e.\ $[a,b]_\tau := \{ x\in X\mid \mu(a,b,x)\sim_\tau x\}$.

We will denote by $\lambda\geq0$ a constant (depending only on $K$ and $H$), such that for all $x,y,z\in X$ we have $\m(x,y,z)\in[x,y]_\lambda$; its existence is proved in \cite[Lemma 9.2]{Bowditch:2014aa}.

Recall that since the median axiom (M3) holds exactly in median algebras, it does hold in coarse median spaces up to a constant $\gamma\geq0$, depending only on the parameters $K, H$ of the coarse median structure (actually $\gamma=3K(3K+2)H(5)+(3K+2)H(0)$). By this we mean that $\mu(x,y,\mu(z,v,w))\sim_\gamma \mu(\mu(x,y,z),\mu(x,y,v), w)$ for all $x,y,z,v,w\in X$. We shall be using $\gamma$ and this fact throughout this section.

Fixing some more notation, given $r,t\geq0$, denote
\begin{align*}
L_1(r)&=(K+1)r+K\lambda+\gamma+2H(0),\\
L_2(r)&=(K+2)r+H(0), \text{ and}\\
L_3(r,t)&=3^dK^drt+r. 
\end{align*}
The point is that $L_1$ and $L_2$ are linear functions of $r$, and $L_3$ is linear in $r$ with $t$ fixed, and bounded by a linear function of $rt$ (for $t\geq 1$).

\begin{lemma}\label{lem:containment-int}
  Let $X$ be a coarse median space, $r\geq0$, and let $a,b\in X$, $x\in[a,b]_\lambda$. Then $[a,x]_r\subset[a,b]_{L_1(r)}$.
\end{lemma}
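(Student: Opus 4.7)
The plan is to unpack the hypothesis $y\in[a,x]_r$ and use it together with $x\in[a,b]_\lambda$ and the approximate axioms (C1), (M2) (which holds exactly after the paper's reduction) and the coarse (M3)-constant $\gamma$, to show that $\mu(a,b,y)$ is within $L_1(r)$ of $y$. Concretely, let $y\in[a,x]_r$, so $\rho(\mu(a,x,y),y)\le r$; I want to bound $\rho(\mu(a,b,y),y)$ by $(K+1)r+K\lambda+\gamma+2H(0)$.

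First I would replace the ``$y$'' inside $\mu(a,b,y)$ by $\mu(a,x,y)$: applying (C1) to the triples $(a,b,y)$ and $(a,b,\mu(a,x,y))$ and using $y\sim_r\mu(a,x,y)$ yields
\[
\rho\bigl(\mu(a,b,y),\,\mu(a,b,\mu(a,x,y))\bigr)\le Kr+H(0).
\]
Next, the approximate associativity (M3) up to $\gamma$ gives
\[
\mu(a,b,\mu(a,x,y))\sim_\gamma \mu\bigl(\mu(a,b,a),\mu(a,b,x),y\bigr)=\mu(a,\mu(a,b,x),y),
\]
where the last equality uses $\mu(a,b,a)=a$ (axiom (M2), available by the reduction on p.~22 of \cite{Bowditch:2013ab} invoked in the preliminaries).

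Now I invoke the hypothesis $x\in[a,b]_\lambda$, i.e.\ $\mu(a,b,x)\sim_\lambda x$, and apply (C1) once more to the triples $(a,\mu(a,b,x),y)$ and $(a,x,y)$:
\[
\rho\bigl(\mu(a,\mu(a,b,x),y),\,\mu(a,x,y)\bigr)\le K\lambda+H(0).
\]
Finally, a single triangle inequality chaining these three estimates with the defining inequality $\rho(\mu(a,x,y),y)\le r$ gives
\[
\rho(\mu(a,b,y),y)\le Kr+H(0)+\gamma+K\lambda+H(0)+r=(K+1)r+K\lambda+\gamma+2H(0)=L_1(r),
\]
which is exactly the statement $y\in[a,b]_{L_1(r)}$.

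There is really no obstacle here; the lemma is a bookkeeping exercise whose only content is choosing the right sequence of substitutions so that each of the three ``inexactnesses'' (the Lipschitz defect $K$ in (C1), the approximate associativity $\gamma$, and the coarseness $\lambda$ of $x$ in $[a,b]$) appears exactly once. The definition $L_1(r)=(K+1)r+K\lambda+\gamma+2H(0)$ has been chosen precisely to absorb the four summands $Kr$, $r$, $K\lambda$, $\gamma$, and the two $H(0)$'s produced by the two uses of (C1).
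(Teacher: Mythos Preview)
Your proof is correct and matches the paper's own argument essentially line for line: the same chain $\mu(a,b,y)\sim_{Kr+H(0)}\mu(a,b,\mu(a,x,y))\sim_\gamma\mu(a,\mu(a,b,x),y)\sim_{K\lambda+H(0)}\mu(a,x,y)\sim_r y$, summed via the triangle inequality to give $L_1(r)$. The only cosmetic difference is that the paper names the generic point $z$ rather than $y$.
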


\begin{proof}
  Let $z\in[a,x]_r$. Thus $\m(a,x,z)\sim_r z$ and by assumption also $\m(a,b,x)\sim_\lambda x$. Hence
  \begin{align*}
    \m(a,b,z) &\sim_{Kr+H(0)} \m(a,b,\m(a,x,z))\\
    &\sim_\gamma \m(\m(a,b,a),\m(a,b,x),z)\\
    &\sim_{K\lambda+H(0)} \m(a,x,z) \sim_{r} z.
  \end{align*}
  Thus $\rho(\m(a,b,z),z)\leq (K+1)r+K\lambda+\gamma+2H(0)=L_1(r)$, which means by definition that $z\in[a,b]_{L_1(r)}$.
\end{proof}

In what follows, $r$ can be thought of `a scale' and $t$ of `a distance'. In other words, the statements can read as `given a distance ($t$) at which we want the space to behave, there exists a scale ($r_t)$, such that on all larger scales ($r\geq r_t$) it behaves as a median space, with an error proportional to $r$'.

\begin{proposition}\label{prop:coarse-hull}
  Let $X$ be a quasigeodesic coarse median space of rank at most $d$.
  For every $\kappa>0$ and $t>0$, there exists $r_t>0$, such that for all $r\geq r_t$, $a,b\in X$ and $A\subset B(a,rt)\cap [a,b]_\kappa$ with $\sep(A)\geq r$, there exists $h\in [a,b]_{L_1(r)}$, such that
  \begin{itemize}
    \item $\rho(a,h)\leq L_3(r,t)$ and
    % \item for all $x\in B(a,rt)$ we have $\rho(x,\m(a,b,x))\leq \kappa$ implies $\rho(x,\m(a,h,x))\leq r$ (in other words, $B(a,rt)\cap[a,b]_\kappa \subseteq [a,h]_{r}$).
    \item $A\subset [a,h]_{r}$.
  \end{itemize}
\end{proposition}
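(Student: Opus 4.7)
The plan is proof by contradiction, reducing to the topological setting of Proposition \ref{prop:top-hull} via an asymptotic cone. If the conclusion fails, I would extract sequences $r_n \to \infty$, $a_n, b_n \in X$, and $A_n \subset B(a_n, r_n t) \cap [a_n, b_n]_\kappa$ with $\sep(A_n) \geq r_n$ for which no admissible $h$ exists at scale $r_n$. Passing to the asymptotic cone $X_\infty$ with rescaling $r_n$ and basepoints $a_n$ (fixing a non-principal ultrafilter $\omega$), I obtain a complete metric median algebra of rank at most $d$ satisfying (L2) with the same constant $K$. The set $A_\infty$ of $\omega$-limits of sequences $(x_n)$, $x_n \in A_n$, lies in the closed $t$-ball around $a_\infty$ with pairwise distances $\geq 1$, and since $\kappa/r_n \to 0$, $A_\infty$ lies inside the median-algebraic interval $[a_\infty, b_\infty]$.

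I would then apply Proposition \ref{prop:top-hull} inside the cone to produce $h_1^\infty, \ldots, h_d^\infty \in A_\infty$ such that $A_\infty \subset [a_\infty, h_\infty]$, where $h_\infty := \m_\infty(h_1^\infty, \ldots, h_d^\infty; b_\infty)$, with the distance bound $\rho_\infty(a_\infty, h_\infty) \leq 3^d K^d t$. Lifting back to $X$, choose representing sequences $h_i^{(n)} \in A_n$ of each $h_i^\infty$ and define $h_n := \m(h_1^{(n)}, \ldots, h_d^{(n)}; b_n)$ as an iterated coarse median. Cone convergence ensures $\rho(a_n, h_n) \leq L_3(r_n, t)$ for $\omega$-almost all $n$, while $h_n \in [a_n, b_n]_{L_1(r_n)}$ follows from the iterative definition (which directly gives $h_n \in [h_d^{(n)}, b_n]_\lambda$) combined with $h_d^{(n)} \in [a_n, b_n]_\kappa$ via a straightforward (M3)-up-to-$\gamma$ computation in the spirit of Lemma \ref{lem:containment-int}, all additive constants being absorbed by $L_1(r_n)$ once $n$ is large.

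The contradiction is delivered by the third property. The failure hypothesis produces, for each $n$, some $z_n \in A_n$ with $\rho(\m(a_n, h_n, z_n), z_n) > r_n$. Its $\omega$-limit $z_\infty$ belongs to $A_\infty \subset [a_\infty, h_\infty]$, so $\m_\infty(a_\infty, h_\infty, z_\infty) = z_\infty$ in the median algebra, which pulls back to $\rho(\m(a_n, h_n, z_n), z_n)/r_n \to 0$ and contradicts the strict inequality. The main technical hurdle I anticipate is that $b_n$ may fail to survive in the cone when $\rho(a_n, b_n)/r_n$ is unbounded, so that $b_\infty$ is not a point of $X_\infty$ and Proposition \ref{prop:top-hull} cannot be applied directly. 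I would address this by exploiting the quasigeodesicity hypothesis to replace $b_n$ by a point $b_n'$ lying on a quasigeodesic from $a_n$ to $b_n$ at rescaled distance of order $t$, arranged so that $A_n \subset [a_n, b_n']_{\kappa'}$ for an enlarged $\kappa'$ and $b_n' \in [a_n, b_n]_\lambda$; Lemma \ref{lem:containment-int} then transfers the coarse-interval estimate from $[a_n, b_n']_{L_1(r)}$ back to $[a_n, b_n]_{L_1(r)}$.
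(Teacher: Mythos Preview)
Your overall strategy matches the paper's (contradiction, asymptotic cone, Proposition~\ref{prop:top-hull}, pull back, extract a witness $z_n$), but two ingredients are missing. The more serious one: Proposition~\ref{prop:top-hull} requires $C$ to be \emph{compact}, and you only observe that $A_\infty$ is $1$-separated and contained in $\overline{B(a_\infty,t)}$. Asymptotic cones are typically not locally compact---an $\R$-tree with infinite branching already has infinite $1$-separated bounded subsets---so boundedness plus $1$-separation does not force $A_\infty$ to be finite. The paper handles this \emph{before} passing to the cone: it invokes \cite[Lemma~9.7]{Bowditch:2014aa} to bound $|A_n|$ by a constant $p=p(K,H,d,\kappa,t)$, whence the limit set $A$ is finite and Proposition~\ref{prop:top-hull} applies.

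Your proposed fix for the non-existence of $b_\infty$ is also not workable as stated. A point $b_n'$ on a quasigeodesic from $a_n$ to $b_n$ bears no a priori relation to the coarse median, so there is no reason it should lie in $[a_n,b_n]_\lambda$, and still less that $A_n\subset[a_n,b_n']_{\kappa'}$ with $\kappa'$ independent of $n$. The paper instead applies \cite[Lemma~9.6]{Bowditch:2014aa} (whose constants depend on the cardinality bound $p$ above, so the previous step is a prerequisite) to produce $d_n$ with $A_n\subset[a_n,d_n]_{\kappa'}$ and $\rho(a_n,d_n)$ linear in $r_n$; it then sets $b_n':=\m(a_n,b_n,d_n)\in[a_n,b_n]_\lambda$, and a short (M3)-up-to-$\gamma$ computation yields $A_n\subset[a_n,b_n']_{\kappa_1}$ for a constant $\kappa_1$. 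Quasigeodesicity is used only as a standing hypothesis for Bowditch's lemmas, not to build $b_n'$.
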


\begin{corollary}\label{cor:coarse-hull}
  Let $X$ be a quasigeodesic coarse median space of rank at most $d$.
  For every $\kappa>0$ and $t>0$, there exists $r_t>0$, such that for all $r\geq r_t$, $a,b\in X$ there exists $h\in [a,b]_{L_1(r)}$, such that
  \begin{itemize}
    \item $\rho(a,h)\leq L_3(r,t)$ and
    \item $B(a,rt)\cap [a,b]_\kappa\subset [a,h]_{L_2(r)}$.
  \end{itemize}
\end{corollary}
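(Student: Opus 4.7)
The plan is to derive this corollary from Proposition~\ref{prop:coarse-hull} by a standard net argument. The proposition already produces a point $h$ that coarsely ``spans'' an $r$-separated set $A$, so the idea is to feed the proposition a maximal $r$-separated subset of the target set $B(a,rt)\cap [a,b]_\kappa$, and then transfer the conclusion from $A$ to the whole set at the cost of slightly relaxing the interval constant from $r$ to $L_2(r)$.

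Concretely, I would first fix $\kappa,t>0$, take the $r_t$ supplied by Proposition~\ref{prop:coarse-hull}, and for $r\geq r_t$ and $a,b\in X$ choose a maximal $r$-separated subset $A\subseteq B(a,rt)\cap[a,b]_\kappa$ (existence by Zorn's lemma, or trivially in the uniformly locally finite setting). By maximality every $z\in B(a,rt)\cap[a,b]_\kappa$ is within distance $r$ of some $y\in A$. Apply the proposition to $A$ to obtain $h\in[a,b]_{L_1(r)}$ with $\rho(a,h)\leq L_3(r,t)$ and $A\subset[a,h]_r$; this $h$ is the one the corollary requires, so only the second bullet needs verification.

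To verify $z\in[a,h]_{L_2(r)}$, pick $y\in A$ with $\rho(y,z)\leq r$ and use the coarse Lipschitz axiom (C1) to move $z$ to $y$ inside the median:
\begin{align*}
\rho(\m(a,h,z),z) &\leq \rho(\m(a,h,z),\m(a,h,y)) + \rho(\m(a,h,y),y) + \rho(y,z)\\
&\leq \bigl(K\rho(z,y)+H(0)\bigr) + r + r\\
&\leq Kr+H(0)+2r = (K+2)r+H(0) = L_2(r),
\end{align*}
where we used $y\in[a,h]_r$ for the middle term. This yields $z\in[a,h]_{L_2(r)}$, completing the proof.

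I do not expect any serious obstacle here: all the substantive work (projecting deep into the interval, controlling the rank-$d$ hull in the asymptotic cone, tracking the $L_1$ and $L_3$ bounds) has been absorbed into Proposition~\ref{prop:coarse-hull}. The only care needed is bookkeeping of constants: one must check that the single application of (C1) is what produces exactly the extra $Kr+H(0)+r$ that upgrades $r$ to $L_2(r)=(K+2)r+H(0)$, and that no stronger hypothesis on $z$ (such as membership in $[a,b]_\kappa$ with $\kappa$ entering the final constant) is smuggled in — indeed $\kappa$ enters only through the application of the proposition to $A$ and plays no role in the final estimate.
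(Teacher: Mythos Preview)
Your proposal is correct and essentially identical to the paper's own proof: both choose $A$ to be a maximal $r$-separated subset of $B(a,rt)\cap[a,b]_\kappa$, apply Proposition~\ref{prop:coarse-hull} to obtain $h$, and then use a single application of (C1) together with $y\in[a,h]_r$ and $\rho(y,z)\leq r$ to conclude $z\in[a,h]_{L_2(r)}$. The bookkeeping of constants matches exactly.
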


\begin{proof}[Proof of Corollary \ref{cor:coarse-hull}]
  This readily follows from Proposition \ref{prop:coarse-hull}, by observing that we may choose $A$ to be a maximal $r$-separated subset of $B(a,rt)\cap[a,b]_\kappa$. Then any point $x\in B(a,rt)\cap[a,b]_\kappa$ is at most $r$-far from a point $a_x\in A$, hence the condition $A\subset[a,h]_r$ implies that
  \begin{align*}
    \m(a,h,x)  &\sim_{Kr+H(0)} \m(a,h,a_x) \sim_{r} a_x \sim_{r} x.
  \end{align*}
  Since we denoted $Kr+H(0)+r+r=L_2(r)$, the above reads $x\in[a,h]_{L_2(r)}$.
\end{proof}

\begin{proof}[Proof of Proposition \ref{prop:coarse-hull}]
  We proceed by contradiction: suppose that for some $\kappa$ and $t$ the statement is not true, i.e. there exists a sequence $0<r_1<r_2<\dots\in\R$ and for each $n\in\N$ there exist $a_n,b_n\in X$ and $A_n\subset[a_n,b_n]_\kappa\cap B(a_n,r_nt)$ with $\sep(A_n)\geq r_n$, such that for all $h\in [a_n,b_n]_{L_1(r_n)}$ with $\rho(a_n,h)\leq L_3(r_n,t)$ there exists $x\in A_n$ with $\rho(x,\m(a_n,h,x))>r_n$.

  It follows from \cite[Lemma 9.7]{Bowditch:2014aa} that we can assume that the cardinalities $|A_n|$ are uniformly bounded by a constant $p$ depending on $K,H,d,\kappa,t$.

  The next step is to argue that we can arrange that the distances from $a_n$ to $b_n$ are linear in $r_n$.

  \textbf{Claim.} There exist constants $\delta_1,\delta_2,\kappa_1\geq 0$ (depending only on $K,H,\kappa,t$ and $p$) and points $b_n'\in [a_n,b_n]_\lambda$, such that $\rho(a_n,b_n')\leq \delta_1 r_n+\delta_2$ and $A_n\subset [a_n,b_n']_{\kappa_1}$.

  The Claim follows from \cite[Lemma 9.6]{Bowditch:2014aa}, which says that in our situation there are constants $\zeta,\xi,\kappa'$ (depending only on $K,H,\kappa,t$ and $p$), and points $c_n,d_n\in X$, such that $A_n\subset [c_n,d_n]_{\kappa'}$ and $\diam(A_n\cup\{c_n,d_n\})\leq \zeta\diam(A_n)+\xi\leq2\zeta r_nt+\xi$. Since $A_n\subset B(a_n,r_nt)$, by the proof of that Lemma we can assume that $c_n=a_n$ for every $n$. Finally, we define $b_n'=\m(a_n,b_n,d_n)\in [a_n,b_n]_\lambda$ and check that
  $$
  b_n'=\m(a_n,b_n,d_n) \sim_{K(2\zeta r_nt+\xi)+H(0)} \m(d_n,b_n,d_n) = d_n \sim_{2\zeta r_nt+\xi} a_n,
  $$
  and for every $x\in A_n$ (so that $\m(a_n,b_n,x)\sim_\kappa x$ and $\m(a_n,d_n,x)\sim_{\kappa'} x$)
  \begin{align*}
    \m(a_n,b_n',x) &= \m(a_n,\m(a_n,b_n,d_n),x)\\
    &\sim_\gamma \m(\m(a_n,x,b_n),\m(a_n,x,d_n),a_n)\\
    &\sim_{K(\kappa+\kappa')+H(0)} \m(x,x,a_n) = x.
  \end{align*}
  So altogether, we put $\kappa_1=K(\kappa+\kappa')+H(0)+\gamma$, $\delta_1=2(K+1)\zeta t$ and $\delta_2=(K+1)\xi+H(0)$ and the Claim is proved.

  \smallskip

  We have now set up the situation so that we can conclude the proof by going to the asymptotic cone.

  % Consider now the sequence of rescaled metrics $\rho/r_n$, and choose the basepoints to be $a_n$. Then the asymptotic cone $(X_\infty,\rho_\infty,a)$ of $X$ with this data is a metric median algebra of rank at most $d$ (satisfying \red{L2}).
  % Moreover, the embedding results of Bowditch apply to this situation, see the penultimate paragraph in \cite[Section 1]{Bowditch:2014aa}.

  Let $(X_\infty,\rho_\infty,\m_\infty)$ be an asymptotic cone of $X$, with the sequence of scales $(r_n)$, basepoints $(a_n)$ and any non-principal ultrafilter on $\N$.

  The sequences $(a_n)$ and $(b_n')$ determine points $a,b\in X_\infty$ (with $\rho_\infty(a,b)\leq\delta_1$), and the intervals $[a_n,b_n']_{\kappa_1}$ converge to the interval $[a,b]$ in $X_\infty$. Also the sets $A_n$ converge to a (finite, $1$-separated) set $A\subset [a,b]\cap B(a,t)$.

  By Proposition \ref{prop:top-hull}, there exists $h\in[a,b]$, such that $A\subset B(a,t)\cap [a,b]\subset [a,h]$ and $\rho_\infty(a,h)\leq 3^dK^dt$. This implies that we have a sequence of points $h_n\in X$, eventually in $[a_n,b_n']_{r_n}$\footnote{Since $\m(a,b,h)=h$, we have $\rho(h_n,\m(a_n,b_n',h_n))/r_n\to 0$, hence the claim.}, such that $\lim\rho(a_n,h_n)/r_n \leq 3^dK^dt$, thus eventually $\rho(a_n,h_n)\leq 3^dK^dr_nt+r_n = L_3(r_n,t)$.

  Since $b_n'\in[a_n,b_n]_\lambda$, and $h_n\in[a_n,b_n']_{r_n}$, by Lemma \ref{lem:containment-int} we have that $h_n\in[a_n,b_n]_{L_1(r_n)}$. Hence, by our original assumption, there (eventually) exist points $x_n\in A_n$ with $\rho(x_n,\m(a_n,h_n,x_n))>r_n$.
  The sequence of $x_n$ yields a point $x\in A$, such that $\rho_\infty(x,\m_\infty(a,h,x)) \geq 1$. This point witnesses that $A\not\subset [a,h]$, which is a contradiction.
\end{proof}

% \begin{lemma}\label{lem:switch-coarse}
%   Let $X$ be a \red{coarse median\ldots}. For every $\kappa>0$ there exists $\kappa'>0$, such that whenever $a,b\in X$, $x\in [a,b]_\kappa$ and $y\in [a,x]_\kappa$, then $x \in [y,b]_{\kappa'}$.
% \end{lemma}

% \blue{
% \begin{proof}
%   For median, it is just Lemma \ref{lem:switch-in-int}. The proof is just a computation with $\m$ so it should hold uniformly in coarse median.
% \end{proof}
% }

\begin{lemma}\label{lem:coarse-switch-r}
  Let $X$ be a coarse median space. There exist constants $\alpha,\beta\geq0$ (depending only on the parameters of the coarse median structure), such that the following holds: let $a,b,h,m\in X$ and $r\geq0$ satisfy $m\in [a,h]_{L_2(r)}$, $h\in[a,b]_{L_1(r)}$. Then $p=\m(m,b,h)$ satisfies $\rho(h,p)\leq \alpha r+\beta$.
  % \begin{itemize}
  %   \item $p\in[m,b]_\kappa$, and
  %   \item $\rho(h,p)\leq \alpha r+\beta$.
  % \end{itemize}
\end{lemma}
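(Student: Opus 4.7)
The plan is to coarsify the identity computation of Lemma \ref{lem:switch-in-int}. Recall that lemma showed that in an exact median algebra, the hypotheses $m \in [a,h]$ and $h \in [a,b]$ force $\mu(m,b,h) = h$ on the nose, via the chain
\[
\mu(m,b,h) = \mu(\mu(a,h,m),b,h) = \mu(\mu(a,b,h),\mu(h,h,b),m) = \mu(h,h,m) = h.
\]
Since we are working in a coarse median space, each of these equalities is replaced by a $\sim$-relation whose error is controlled linearly in the defects of our hypotheses, namely $L_1(r)$ and $L_2(r)$, plus the universal coarse-M3 constant $\gamma$.

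Concretely, I would proceed in three steps. First, use the hypothesis $m \in [a,h]_{L_2(r)}$, i.e.\ $m \sim_{L_2(r)} \mu(a,h,m)$, together with the (C1) Lipschitz estimate for $\mu$, to obtain
\[
\mu(m,b,h) \sim_{K L_2(r) + H(0)} \mu(\mu(a,h,m), b, h).
\]
Second, apply the coarse version of (M3) (up to $\gamma$), together with the exact (M1) and (M2) which we are assuming hold on the nose, to swap the order of operations:
\[
\mu(\mu(a,h,m), b, h) \sim_\gamma \mu(\mu(a,b,h), \mu(h,b,h), m) = \mu(\mu(a,b,h), h, m).
\]
Third, use the other hypothesis $h \in [a,b]_{L_1(r)}$, i.e.\ $\mu(a,b,h) \sim_{L_1(r)} h$, and (C1) once more, to replace the inner median by $h$:
\[
\mu(\mu(a,b,h), h, m) \sim_{K L_1(r) + H(0)} \mu(h,h,m) = h,
\]
the last equality by (M2).

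Chaining the three estimates via the triangle inequality gives
\[
\rho(\mu(m,b,h), h) \leq K L_2(r) + K L_1(r) + \gamma + 2 H(0),
\]
which, since $L_1$ and $L_2$ are linear in $r$ with coefficients depending only on $K$ and $H$, is of the form $\alpha r + \beta$ for some constants $\alpha, \beta \geq 0$ depending only on the coarse median parameters. I do not anticipate any genuine obstacle: the only thing to be careful about is correctly invoking the exact (M1), (M2) (which we have by the remark after the definition of coarse median) so that the re-bracketing step only costs one factor of $\gamma$, rather than several.
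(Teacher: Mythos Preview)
Your proposal is correct and is essentially identical to the paper's own proof: the same three-step chain $\mu(m,b,h)\sim_{KL_2(r)+H(0)}\mu(\mu(a,h,m),b,h)\sim_\gamma \mu(\mu(b,h,a),\mu(b,h,h),m)\sim_{KL_1(r)+H(0)} h$, yielding the same bound $K(L_1(r)+L_2(r))+2H(0)+\gamma$. Your care about using exact (M1) and (M2) so that only one $\gamma$ is incurred is exactly right.
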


\begin{proof}
  Note that the assumptions say that $m\sim_{L_2(r)}\mu(a,h,m)$ and $h\sim_{L_1(r)} \mu(a,b,h)$. We estimate
  \begin{align*}
  p &= \m(m,b,h) \sim_{KL_2(r)+H(0)} \m(\m(a,h,m),b,h)\\
  &\sim_{\gamma} \m(\m(b,h,h),\m(b,h,a),m)  \\
  &\sim_{KL_1(r)+H(0)} \m(h,h,m) = h.
  \end{align*}
  Altogether, $\rho(h,p)\leq K(L_1(r)+L_2(r))+2H(0)+\gamma$, which is a linear function of $r$.
\end{proof}

\begin{theorem}\label{thm:main}
  Let $X$ be a uniformly locally finite, quasigeodesic, at most exponential growth, coarse median space of finite rank. Then $X$ has property A.
\end{theorem}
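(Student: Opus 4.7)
The plan is to apply the Brown--Ozawa criterion (Proposition \ref{prop:criterion}) with sets $S(x,k,l)$ coarsifying the CAT(0) construction of Proposition \ref{prop:cat0cube}: the ``far end'' of the convex hull from Corollary \ref{cor:coarse-hull} will play the role of the $3l$-th vertex on the normal cube path. Concretely, fix a basepoint $x_0 \in X$ and a constant $t_0 \ge 1$. By Corollary \ref{cor:coarse-hull} with $\kappa = \lambda$ and $t=t_0$, there is a threshold $r_{t_0}$ such that for each $l \ge r_{t_0}$ and each $w \in X$ we may select a point $h_w^{(l)} \in [w,x_0]_{L_1(l)}$ with $\rho(w, h_w^{(l)}) \le L_3(l, t_0)$ and $B(w, l t_0) \cap [w, x_0]_\lambda \subset [w, h_w^{(l)}]_{L_2(l)}$. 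Restricting via Remark \ref{lem:criterion-l-subseq} to $l \ge r_{t_0}$, define
$$
S(x, k, l) := \{\, h_w^{(l)} : w \in B(x, k)\,\} \quad \text{for } x \in X,\; k \in \{1, \dots, 3l\}.
$$

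Conditions (i) and (ii) of Proposition \ref{prop:criterion} follow from the triangle inequality exactly as in the proof of Proposition \ref{prop:cat0cube}: the bound $\rho(x, h_w^{(l)}) \le 3l + L_3(l, t_0)$ is linear in $l$, giving (i); and since $w \in B(x, k-\rho(x,y))$ implies $w \in B(x, k) \cap B(y, k)$, we get $S(x, k-\rho(x,y), l) \subset S(x,k,l) \cap S(y, k, l)$, with the reverse inclusion similar. For (iii), as in the CAT(0) case, we first confine $S(x,k,l)$ to a ``slab'': using $h_w^{(l)} \in [w, x_0]_{L_1(l)}$, axiom (C1), and $\rho(x,w) \le 3l$, the estimate
$$
\m(x, x_0, h_w^{(l)}) \sim_{3Kl+H(0)} \m(w, x_0, h_w^{(l)}) \sim_{L_1(l)} h_w^{(l)}
$$
yields $S(x,k,l) \subset [x, x_0]_{C_1 l} \cap B(x, C_2 l)$ for constants $C_1, C_2$ depending only on the coarse median parameters.

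The heart of the argument is the sub-exponential bound on the cardinality of this slab. The strategy is to pass to an asymptotic cone: if $|S(x_n, k_n, l_n)|^{1/l_n}$ fails to tend to $1$ along some sequence, take a maximal $\eps l_n$-separated subset $T_n$ of $S(x_n, k_n, l_n)$; by the at-most-exponential growth estimate $|B(\cdot, \eps l_n)| \le A e^{B\eps l_n}$, $|T_n|$ still grows super-polynomially for $\eps$ sufficiently small. In the asymptotic cone $(X_\infty, \rho_\infty, \m_\infty)$ built from the sequence $(l_n)$ and basepoints $(x_n)$, the image of $T_n$ is an $\eps$-separated subset of $[a, b]_{C_1} \cap B(a, C_2)$, where $a,b$ are the limits of $x_n, x_0$; its unbounded cardinality should contradict the total boundedness of $[a, b]_{C_1} \cap B(a, C_2)$ --- a consequence of Bowditch's bilipschitz embedding of $X_\infty$ into a product of at most $d$ $\R$-trees (which uses (L2) and Lipschitz path connectedness afforded by quasigeodesicity).

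The hard part is ensuring this total boundedness with the thickness $C_1 l$: when we rescale to the cone, the thickness $C_1$ does not shrink to zero, so a direct appeal to \cite[Lemma 9.7]{Bowditch:2014aa} --- which assumes a fixed bounded thickness --- is not immediate. The natural remedy, presumably using Lemma \ref{lem:coarse-switch-r}, is to first ``switch'' the $h_w^{(l)}$'s to points lying in a fixed-thickness interval $[x, h^*]_\lambda$ for a suitable reference point $h^*$ (essentially projecting along the median onto a single coarse geodesic), thereby reducing the counting problem to the fixed-thickness situation where Bowditch's lemma bites. Once the slab has polynomial (or at worst sub-exponential) cardinality for each fixed $\eps$, combining with the exponential growth estimate and choosing $\eps = \eps_l \to 0$ slowly yields the desired $p(l)$ with $p(l)^{1/l} \to 1$, completing the verification of (iii).
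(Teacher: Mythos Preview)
Your construction fixes a single value $t_0$ and uses $r=l$ as the scale in Corollary \ref{cor:coarse-hull}. This collapses the two scales that the paper keeps separate, and neither of your two proposed routes to condition (iii) survives that collapse.

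For the asymptotic--cone route: after rescaling by $l_n$, your slab becomes $[a,b]_{C_1}\cap B(a,C_2)$ in $X_\infty$ with $C_1>0$. This set is \emph{not} totally bounded in general --- already in the asymptotic cone of a free group (an $\R$-tree with uncountable branching at every point) the $C_1$-neighbourhood of a segment meets a bounded ball in a set containing infinitely many $C_1$-separated points. Bowditch's embedding into a product of $\R$-trees does not help here, because the failure is visible in a single tree factor. So the contradiction you aim for does not materialise.

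For the switching route via Lemma \ref{lem:coarse-switch-r}: the lemma outputs $\rho(h_w,p_w)\le \alpha r+\beta$, and in your setup $r=l$. Hence the $p_w$'s lie in the fixed-thickness interval $[x,x_0]_{L_1(\lambda)}$ (polynomially many by \cite[Proposition 9.8]{Bowditch:2014aa}), but each $h_w$ is only within $\alpha l+\beta$ of its $p_w$. Under at-most-exponential growth this gives $|S(x,k,l)|\le P(l)\,c'c^{\alpha l}$, whence $|S(x,k,l)|^{1/l}\to c^\alpha>1$. Your proposed ``$\eps_l\to 0$ slowly'' cannot repair this: $\alpha$ is a fixed constant of the coarse median structure, not a separation parameter you get to choose.

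The paper's proof avoids exactly this by letting $t$ vary: it applies Corollary \ref{cor:coarse-hull} for \emph{every} $t\in\N$, obtaining thresholds $r_t$, and sets $l_t=(tr_t-H(0))/(3K)$. The point $h_y$ is produced at scale $r=r_t$ (not $r=l_t$), so the switching error is $\alpha r_t+\beta$, while the index in the criterion is $l_t\sim tr_t$. Since $r_t/l_t\to 0$ as $t\to\infty$, the bound $P(l_t)\,c'c^{r_t}$ satisfies $\bigl(P(l_t)\,c'c^{r_t}\bigr)^{1/l_t}\to 1$. The choice $3Kl_t+H(0)=tr_t$ is exactly what guarantees $m_y=\m(x,y,x_0)\in B(y,r_t t)$, making the switch applicable. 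In short, the missing idea is the decoupling of the error scale $r_t$ from the averaging scale $l_t$, with $r_t=o(l_t)$.
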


\begin{proof}
  The proof follows the idea of our proof for CAT(0) cube complexes, which relies on Proposition \ref{prop:criterion}. We shall verify its assumptions. Let $\alpha,\beta>0$ be the constants from Lemma \ref{lem:coarse-switch-r} and fix a basepoint $x_0\in X$. 

  We now apply Corollary \ref{cor:coarse-hull} for $\kappa=\lambda$ and all $t\in \N$ to obtain a sequence $r_t\in\N$, so that the conclusion of the Corollary holds. Furthermore, we can choose the $r_t$ inductively to arrange that the sequence $\N\ni t\mapsto l_t=\frac{tr_t-H(0)}{3K}$ is increasing.

  For a moment, fix $x\in X$, $t\in\N$ and $k\in\{1,\dots, 3l_t\}$. For every $y\in B(x,k)$, Corollary \ref{cor:coarse-hull} applied for $a=y$, $b=x_0$ and $r=r_t$ produces for us a point $h_y\in [y,x_0]_{L_1(r_t)}$. We collect these points into the set
  $$
  S(x,k,l_t) = \left\{h_y\in X\mid y\in B(x,k)\right\}.
  $$
  Loosely speaking, the set $S(x,k,l_t)$ contains one point associated to each $y\in B(x,k)$, which should be thought of as being ``$tr_t$-deep'' inside the interval $[y,x_0]_\lambda$.

  Defined like this, the condition (ii) of Proposition \ref{prop:criterion} is automatic, (i) follows from the first bullet of Corollary \ref{cor:coarse-hull}, and finally (iii) requires some checking:

  Take $y\in B(x,k)$, with the notation as above. Denote $m_y=\m(x,y,x_0)$. Then $m_y\in[y,x_0]_\lambda$ and 
  $$
  \rho(y,m_y)\leq K\rho(x,y)+H(0)\leq K\cdot 3l_t + H(0)=tr_t.
  $$
  Thus the second bullet of Corollary \ref{cor:coarse-hull} implies that $m_y\in [y,h_y]_{L_2(r_t)}$. Since we also know that $h_y\in[y,x_0]_{L_1(r_t)}$, Lemma \ref{lem:coarse-switch-r} now implies that the point $p_y=\m(m_y,x_0,h_y)\in [m_y,x_0]_\lambda$ satisfies $\rho(h_y,p_y)\leq \alpha r_t+\beta$. As $m_y=\m(x,y,x_0)\in[x,x_0]_\lambda$, Lemma \ref{lem:containment-int} implies $p_y\in [x,x_0]_{L_1(\lambda)}$.

  To summarise, for each $h_y\in S(x,k,l_t)$ we can associate a point $p_y\in [x,x_0]_{L_1(\lambda)}$ satisfying $\rho(h_y,p_y)\leq \alpha r_t+\beta$, and consequently also
  $$
  \rho(x,p_y) \leq \rho(x,y)+\rho(y,h_y)+\rho(h_y,p_y) \leq 3l_t + 3^dK^dtr_t + r_t + \alpha r_t+\beta,
  $$
  which is clearly depends linearly on $l_t$. Hence, by \cite[Proposition 9.8]{Bowditch:2014aa}, the number of the possible points $p_y$ is bounded by $P(l_t)$ for some polynomial $P$ (depending only on $H$, $K$, $d$, and uniform local finiteness of $X$). Since we assume at most exponential growth of $X$, it follows that the cardinality of $S(x,k,l_t)$ is at most $P(l_t)c'c^{r_t}$ for some constants $c,c'\geq 1$. Finally, as $l_t\to\infty$ means by definition also $t\to\infty$, it is easy to see that also $r_t/l_t\to 0$, thus
  $\left(P(l_t)c'c^{r_t}\right)^{1/l_t} \to 1$. This finishes proving the condition (iii) of Proposition \ref{prop:criterion} and we are done.
\end{proof}

%\bibliographystyle{alpha}
%\bibliography{cm-a}

\newcommand{\etalchar}[1]{$^{#1}$}

\end{document}